\newtheorem{theorem}{Theorem}[section]
\newtheorem{lemma}[theorem]{Lemma}
\newtheorem{corollary}[theorem]{Corollary}
\theoremstyle{definition}
\newtheorem{definition}[theorem]{Definition}
\theoremstyle{remark}
\newtheorem{remark}[theorem]{Remark}
\newtheorem{example}[theorem]{Example}
\newcommand{\C}{\mathbb{C}}
\newcommand{\Hb}{\mathbb{H}}
\newcommand{\G}{\mathcal{G}}
\newcommand{\R}{\mathbb{R}}
\newcommand{\Z}{\mathbb{Z}}
\newcommand{\wh}{\widehat}
\newcommand{\curlyP}{\mathcal{P}}
\newcommand{\curlyH}{\mathcal{H}}
\begin{document}

\title{The profinite completion of relatively hyperbolic virtually special groups}

\author{ Pavel Zalesskii\footnote{Partially supported by CNPq and FAPDF}}

\maketitle

\begin{abstract} We give a characterization of  toral relatively hyperbolic virtually special groups in terms of the profinite completion.
We also prove a  Tits alternative for subgroups of the profinite completion $\widehat G$ of a relatively hyperbolic virtually compact special group $G$ and completely describe   finitely generated pro-$p$ subgroups of $\widehat G$.  This applies to the profinite completion of the fundamental group of a hyperbolic arithmetic manifold. We deduce that all finitely generated pro-$p$ subgroups of the congruence kernel of  a standard arithmetic lattice of $SO(n,1)$ are free pro-$p$.  
\end{abstract}

\section{Introduction}

In recent years there has been a great deal of interest in detecting properties of a group $G$ of geometric nature and in particular the fundamental group $\pi_1M$ of a manifold via its finite quotients, or more conceptually by its profinite completion (\cite{BMRS,BMRSbull, BR, LR, liu,sun, MW, WZ-16, WZ-19, Wilk}.  This motivates the study of the profinite completions $\widehat G$ and $\widehat {\pi_1M}$ of such groups. 

In \cite{WZ-16} it was shown that the geometry type of a geometric 3-manifold is detected by the profinite completion of the fundamental group of the manifold; in particular   the profinite version of Hyperbolization theorem and the Seifert conjecture were established there. Moreover, several structure results on the profinite completion of 3-manifold groups and more generally on the profinite completion of virtually compact special hyperbolic groups were proved  in \cite{WZ-16} and \cite{henry}. 

The objective of this paper is to extend  results of \cite{WZ-16} and \cite{henry} to relatively hyperbolic virtually compact special groups and  standard arithmetic hyperbolic manifolds of higher dimension.

\medskip
Virtually compact special cube complexes, developed by Wise and his collaborators are playing a central role in  modern geometric group theory.  A group is called  virtually compact  special if it is the fundamental group of a compact virtually special cube complex whose hyperplanes satisfy certain combinatorial conditions. Wise’s Quasiconvex Hierarchy Theorem \cite[Theorem 13.3]{wise_structure_2012} characterizes the virtually compact special hyperbolic groups in terms of virtual quasiconvex hierarchies.

We first observe  that the profinite version of Hyperbolization theorem   \cite[Theorem A]{WZ-16}  extends to  virtually compact special groups  using \cite[Corollary 1.2] {G}, \cite[Corollary 1.6]{M} and \cite[Theorem F]{WZ-16}. 

\begin{theorem} Let $G$ be a virtually compact special group and $p$ be a prime. Then  $G$ is hyperbolic if and only if $\widehat G$ does not contain $\Z_p\times  \Z_p$, where $\Z_p$ is the group of $p$-adic integers. 
\end{theorem}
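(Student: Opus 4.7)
The plan is to mimic the proof of the 3-manifold analogue \cite[Theorem A]{WZ-16}, substituting the cubical inputs \cite[Corollary 1.2]{G} and \cite[Corollary 1.6]{M} for the 3-manifold specific tools used there.

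For the forward direction, assume $G$ is hyperbolic. Then \cite[Theorem F]{WZ-16} constrains the finitely generated pro-$p$ subgroups of $\widehat G$ strongly enough (e.g.\ they are free pro-$p$, hence in particular contain no non-procyclic abelian subgroup) to rule out a closed copy of $\Z_p\times\Z_p$ in $\widehat G$.

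For the converse I argue contrapositively: if $G$ is virtually compact special but not hyperbolic, then the flat-plane / rank-rigidity phenomenon for cocompactly cubulated groups encoded in \cite[Corollary 1.2]{G} produces an embedded subgroup $H\cong\Z\times\Z$ of $G$, geometrically realised on a flat in the cube complex of a finite index compact special subgroup. It then remains to upgrade this to a closed embedding $\widehat H\hookrightarrow\widehat G$; this is precisely the role of \cite[Corollary 1.6]{M}, which should ensure that in a virtually compact special group the ambient profinite topology induces the full profinite topology on such a flat-stabilising abelian subgroup. Once that is in hand, the decomposition $\widehat{\Z\times\Z}\cong\widehat\Z\times\widehat\Z$ contains $\Z_p\times\Z_p$ as a direct factor, and the proof is complete.

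The main obstacle is the last upgrade. Mere separability of $H$ in $G$ is not enough: one needs the finite index subgroups of the form $H\cap U$, with $U$ running over finite index subgroups of $G$, to be cofinal in the lattice of all finite index subgroups of $H$. This stronger ``efficiency''/goodness type property is the substantive content virtual specialness contributes, and is the part of the argument I expect to rest entirely on \cite[Corollary 1.6]{M}; the rest of the proof is bookkeeping on top of the two geometric/group-theoretic inputs.
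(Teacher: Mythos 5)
Your proposal is correct and follows exactly the route the paper indicates: the paper offers no written-out proof beyond the remark that the statement follows from \cite[Theorem F]{WZ-16} (the hyperbolic direction), \cite[Corollary 1.2]{G} (producing $\Z\times\Z$ in the non-hyperbolic case) and \cite[Corollary 1.6]{M} (the virtual retract property guaranteeing the closure of that $\Z\times\Z$ is all of $\widehat{\Z}\times\widehat{\Z}$), which is precisely your argument. Your closing observation that bare separability would not suffice and that one needs the induced topology on the flat to be the full profinite topology is exactly the point the citation of \cite[Corollary 1.6]{M} is meant to settle.
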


Our first result extends this characterization to an  algebraic profinite characterization of toral relatively hyperbolic virtually compact special groups. It is inspired by the abstract analog of it due to Anthony Genevois \cite{G}.

\begin{theorem}\label{relative hypebolization} A virtually compact special group is  hyperbolic relative to a finite collection of virtually abelian subgroups if and only if its profinite completion does not contain $F_2\times \Z_p$, where $F_2$ is a free pro-$p$ group of rank 2 and $p$ is some prime.
\end{theorem}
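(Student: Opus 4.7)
The plan is to prove the two implications separately. For the backward direction, I would invoke the abstract analog due to Genevois \cite{G}: among virtually compact special groups, $G$ is hyperbolic relative to a finite collection of virtually abelian subgroups if and only if $G$ does not contain a subgroup isomorphic to $F_2\times\Z$. So if $G$ is not toral relatively hyperbolic, then $H:=F_2\times\Z$ embeds in $G$, and the task is to promote this abstract embedding to a profinite one yielding $F_2\times\Z_p\hookrightarrow\widehat G$. For this I would verify that the profinite topology on $G$ restricts to the full profinite topology on $H$; this exploits the strong separability properties of virtually compact special groups (QCERF, double coset separability, and in particular separability of the relevant abelian-by-free subgroups sitting inside a convex subcomplex), inherited from Wise's theorem. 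Granted this, there is an embedding $\widehat H\hookrightarrow\widehat G$; since $H$ is residually-$p$, its pro-$p$ completion -- which in the notation of the statement is exactly the free pro-$p$ of rank two times $\Z_p$ -- injects into $\widehat H$ and hence into $\widehat G$.

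For the forward direction, suppose $G$ is hyperbolic relative to virtually abelian subgroups $P_1,\ldots,P_k$, and assume for contradiction that $F_2\times\Z_p\le\widehat G$. Since this subgroup is itself pro-$p$, the proof reduces to a structure theorem for finitely generated pro-$p$ subgroups of $\widehat G$: \emph{every such subgroup is either contained in a conjugate of the closure $\overline{P_i}$ of some peripheral, or else is free pro-$p$}. Given this classification, the contradiction is routine. The closure $\overline{P_i}$ is virtually abelian as a profinite group, since $P_i$ is finitely generated virtually abelian and its profinite completion surjects onto $\overline{P_i}$, so it cannot contain the non-abelian free pro-$p$ group $F_2$. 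On the other hand, a non-cyclic free pro-$p$ group has trivial center, while $F_2\times\Z_p$ has $\Z_p$ in its center, so the subgroup cannot be free pro-$p$. Either alternative fails, giving the contradiction.

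The principal obstacle is therefore the structure theorem itself, which is the relatively hyperbolic analog of \cite[Theorem F]{WZ-16}. My approach is to let a finitely generated pro-$p$ subgroup $P\le\widehat G$ act on the profinite Bass--Serre tree arising from a peripheral (or quasiconvex-hierarchical) splitting of $G$ provided by Wise's machinery together with the toral peripheral structure; the vertex stabilizers are conjugates of the $\overline{P_i}$, and edge stabilizers are trivial or finite in the toral case. If $P$ fixes a vertex, the peripheral alternative holds; otherwise the pro-$p$ Bass--Serre theory of Ribes--Zalesskii, applied to $P$ acting on this tree with small edge stabilizers, forces $P$ to be free pro-$p$. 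The delicate points will be constructing the profinite tree with injective vertex stabilizers (requiring separability of the peripherals and their intersections, which is available through virtual compact specialness) and making the pro-$p$ Bass--Serre argument work for an arbitrary finitely generated pro-$p$ subgroup of $\widehat G$ without additional cocompactness or finite-presentation hypotheses on $P$.
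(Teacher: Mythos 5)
Your overall architecture (Genevois's abstract characterisation for one implication, a structure theorem for pro-$p$ subgroups of $\widehat G$ for the other) matches the paper's, but there is a genuine gap in your treatment of the ``not relatively hyperbolic $\Rightarrow$ $\widehat G \supseteq F_2\times\Z_p$'' direction. You propose to show that the profinite topology of $G$ induces the \emph{full} profinite topology on the subgroup $H=F_2\times\Z$ supplied by Genevois, so that $\widehat H\hookrightarrow\widehat G$. There is no reason for this to hold: the subgroup produced by \cite[Theorem 1.4]{G} is not known to be quasi-convex, convex-cocompact, or a virtual retract, and QCERF-type separability of $H$ itself would in any case only make $H$ closed, not make every finite-index subgroup of $H$ closed (which is what injectivity of $\widehat H\to\widehat G$ requires). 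This is precisely the difficulty the paper is organised around avoiding. Its proof never embeds $\widehat H$; it works with the closure $\overline H$ and establishes two weaker facts that suffice: (a) the closure of the cyclic centre of $F\times\Z$ contains $\widehat\Z$, because \emph{cyclic} subgroups of virtually compact special groups are virtual retracts by Minasyan \cite[Corollary 1.6]{M}; and (b) the closure of any non-abelian free subgroup of a RAAG contains a non-abelian free pro-$p$ subgroup --- this is Lemma \ref{free}, proved by induction along the HNN hierarchy of the RAAG using the classification of profinite groups acting on profinite trees without non-abelian free pro-$p$ subgroups \cite{Zal90}. You would need to either supply an argument that the Genevois subgroup is nicely embedded (not available) or replace your step by something like Lemma \ref{free}. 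A smaller slip in the same direction: residual $p$-finiteness of $H$ does not give an injection of the pro-$p$ completion into $\widehat H$ (it gives a surjection the other way); one gets $F_2\times\Z_p$ inside $\widehat{F_2}\times\widehat\Z$ via Sylow subgroups instead.

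For the converse direction your contradiction is sound, but two remarks. First, the dichotomy you state (every finitely generated pro-$p$ subgroup is parabolic or free pro-$p$) is not the correct structure theorem --- the right statement, Theorem \ref{main}(ii), is that such a subgroup is a free pro-$p$ product of parabolic subgroups and a free pro-$p$ group (with finite edge groups in the presence of torsion); your contradiction survives because $F_2\times\Z_p$ has non-trivial centre and hence is freely indecomposable. Second, your plan to prove the structure theorem from a single ``peripheral splitting'' with parabolic vertex stabilizers and finite edge stabilizers is too optimistic: no such one-step splitting exists in general, and the paper instead runs an induction over Einstein's quasi-convex malnormal hierarchy \cite{E}, using separability of the hierarchy's edge groups to get actions on profinite trees, profinite relative malnormality \cite[Theorem 4.2]{WZ-16} for $1$-acylindricity, and the acylindrical accessibility theorem \cite[Theorem 3.8]{henry}. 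The paper's own deduction of the forward implication is also slightly different and more economical: it applies Theorem \ref{main}(i) to conclude that every copy of $\Z_p\times\Z_p$ inside $F_2\times\Z_p$ is conjugate into a parabolic, and then uses malnormality of the profinite closures of the parabolics to force all of $F_2\times\Z_p$ into one parabolic, which is impossible since parabolics are virtually abelian.
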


Sylow theory  and   pro-$p$ subgroups have the same importance   in profinite group theory as Sylow theory and $p$-groups  in the theory of finite groups.  Note that pro-$p$ subgroups of $\widehat G$ are infinitely generated in general, so it is natural to begin the study of them  from the finitely generated case. 
Our next theorem   describes  finitely generated pro-$p$ subgroups of the profinite completion of a relatively hyperbolic virtually compact special group and gives the  `Tits alternative' for relatively hyperbolic virtually compact special groups that generalizes \cite[Theorem C]{WZ-16}. It is based on  relatively hyperbolic analogs of Wise’s Quasiconvex Hierarchy Theorem proved by Eduard Einstein \cite{E}.
 We use the notation $\Z_\pi$ to denote $\prod_{p\in \pi} \Z_p$ and use the term parabolic subgroup of $\widehat G$ for closed subgroups of the profinite completion of maximal parabolic subgroups.  

\begin{theorem} \label{main} Let $G$ be a   relatively hyperbolic virtually compact special group and $H$ be a closed
subgroup of $\wh{G}$.

\begin{enumerate}
\item[(i)] If $H$
  does not contain a free non-abelian pro-$p$ subgroup for any prime $p$ then either
 $H$ is parabolic or virtually  isomorphic to
$\Z_\pi\rtimes \Z_\rho$, where $\pi$ and $\rho$
are (possibly empty) sets of primes with $\pi\cap
\rho=\varnothing$ (the action can be described as multiplication by units of $\Z_\pi$ and so $\Z_\rho$ acts as a subgroup of a cyclic group $C_{p-1}$ on $\Z_p$ for odd $p\in \pi$; for  $p=2$ it acts trivially or by inversion).  If $G$ is torsion free, then
`virtually' can be omitted.

\item[(ii)] if $H$ is finitely generated pro-$p$ then $H$ is the pro-$p$ fundamental group of a finite graph of pro-$p$ groups whose edge groups are finite and vertex groups are  parabolic. In particular, if $H$ is torsion free, then $H$ is a free pro-$p$ product of parabolic subgroups.
\end{enumerate}
\end{theorem}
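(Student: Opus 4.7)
The plan is to induct on the relative quasiconvex hierarchy of $G$ provided by Einstein's relative version \cite{E} of Wise's Quasiconvex Hierarchy Theorem. At each step, $G$ splits as an amalgamated free product or HNN extension over a relatively quasiconvex subgroup $C$, with the base of the hierarchy consisting of finite groups and maximal parabolic subgroups. Because $G$ is virtually compact special, the relatively quasiconvex subgroups and the corresponding double cosets are separable in the profinite topology, so each splitting is \emph{profinitely efficient}: it induces the analogous splitting of $\widehat G$ as a profinite amalgamated product or HNN extension, and $\widehat G$ acts on a profinite Bass--Serre tree $T$ whose vertex and edge stabilizers are the closures of conjugates of the factors and of $C$ in $\widehat G$.

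For part (ii), let $H$ be a finitely generated pro-$p$ subgroup of $\widehat G$. Applying pro-$p$ Bass--Serre theory to the action of $H$ on $T$ yields a decomposition of $H$ as the pro-$p$ fundamental group of a finite graph of pro-$p$ groups, with vertex groups of the form $H \cap \widehat V^g$ for $V$ a vertex group of the hierarchy splitting, and edge groups of the form $H \cap \widehat C^g$; finiteness of the graph uses finite generation of $H$. Now recurse on the vertex groups, which are themselves finitely generated pro-$p$ subgroups of smaller stages of the hierarchy. The edge groups $H \cap \widehat C^g$ arise from closures of intermediate quasiconvex subgroups, which in the relatively hyperbolic setting are two-ended away from parabolic incidences, so they can be further split off until all edge groups become finite. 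At the base of the hierarchy, vertex groups become closures of parabolic subgroups, yielding the claimed decomposition. The `in particular' clause is immediate: a torsion-free pro-$p$ group contains no non-trivial finite subgroup, so finite edge groups become trivial and $H$ is a pro-$p$ free product of the parabolic vertex groups.

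For part (i), let $H \leq \widehat G$ be closed with no non-abelian free pro-$p$ subgroup for any prime $p$. Act on $T$: either $H$ fixes a vertex, so that $H$ sits inside the closure of a conjugate of a vertex group and we induct on the hierarchy; or $H$ has no global fixed vertex. In the second case, we claim $H$ must stabilize an arc (an edge or a bi-infinite line) of $T$, for otherwise a ping-pong argument inside a pro-$p$ Sylow of $H$ produces two hyperbolic elements generating a non-abelian free pro-$p$ subgroup, contradicting the hypothesis. Stabilizing an arc, $H$ fits into a short exact sequence with pointwise-arc-stabilizer kernel (treatable by induction) and pro-cyclic quotient. Iterating down the hierarchy, the base cases force $H$ either to be parabolic, or to be virtually $\Z_\pi \rtimes \Z_\rho$ with the prescribed action: the action must factor through units of $\Z_\pi$, and since the finite-order subgroups of $\mathrm{Aut}(\Z_p) \cong \Z_p^\times$ are cyclic of order dividing $p-1$ for odd $p$ and of order at most $2$ for $p = 2$, the stated description follows. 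When $G$ is torsion-free, all stabilizers in $\widehat G$ are torsion-free and `virtually' is superfluous.

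The principal obstacle is establishing the profinite efficiency of Einstein's hierarchy: at each stage one must show that the abstract splitting lifts to a splitting of $\widehat G$ with stabilizers being the closures of the abstract ones, which requires not only separability of the relatively quasiconvex edge groups (available in virtually compact special relatively hyperbolic groups) but also suitable double-coset separability. A secondary hurdle is the ping-pong step in part (i) --- establishing that absence of a non-abelian free pro-$p$ subgroup forces $H$ to stabilize an arc of $T$ relies on the more delicate theory of profinite groups acting on profinite trees, where one must argue carefully with the topology of the tree and the closure operations involved rather than import the discrete Bass--Serre argument verbatim.
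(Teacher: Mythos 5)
Your overall architecture (Einstein's hierarchy, separability of the edge groups, an induced action of the profinite completion on a profinite tree, induction down the hierarchy) matches the paper. But there are two genuine gaps. The first and most serious: you never invoke the \emph{malnormality} of Einstein's hierarchy, and without it your part (ii) does not go through. You assert that ``pro-$p$ Bass--Serre theory'' applied to the action of a finitely generated pro-$p$ group $H$ on the profinite tree $T$ yields a decomposition of $H$ as a finite graph of pro-$p$ groups with vertex groups $H\cap \wh V^g$ and edge groups $H\cap\wh C^g$. No such general decomposition theorem exists in the profinite setting --- this is exactly where the profinite theory is weaker than the discrete one. The paper's route is that the hierarchy is \emph{malnormal}, malnormality passes to the profinite completion (Theorem \ref{thm: Profinite relative malnormality}), hence the action of $\wh{G_0}$ on $T$ is $1$-acylindrical, and then the specific acylindricity theorem (Theorem \ref{general acylindrical}) gives that $H_0$ is a free pro-$p$ product of vertex stabilizers and a free pro-$p$ group --- with no nontrivial edge groups surviving at all. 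Your subsequent claim that the edge groups $H\cap\wh C^g$ are ``two-ended away from parabolic incidences'' and can be ``split off until finite'' is unsupported and, in any case, misidentifies where the finite edge groups in the statement come from: they arise not from splitting off two-ended subgroups but from the passage from $H_0=H\cap\wh{G_0}$ (a free pro-$p$ product of parabolics and a free pro-$p$ group) back to $H$, which contains $H_0$ with finite index; this step uses the Weigel--Zalesskii theorem on virtually free pro-$p$ products \cite{WEZ-16}, a step entirely absent from your proposal (you never account for the finite-index subgroup $G_0$ that Einstein's theorem forces you to pass to).

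The second gap is in part (i). Your ``ping-pong inside a pro-$p$ Sylow'' argument to show that $H$ either fixes a vertex or stabilizes an arc is only a heuristic imported from the discrete case; you acknowledge it is delicate, but the point is that it is not a proof --- the correct tool is the classification of profinite groups without free non-abelian pro-$p$ subgroups acting on profinite trees (\cite{Zal90}, or \cite[Remark 5.3]{WZ-16} in the form the paper uses), which says that such a group either fixes a vertex or normalizes an edge stabilizer with soluble quotient of the prescribed $\Z_\pi\rtimes\Z_\rho$ form. Citing and applying that theorem replaces your entire second paragraph of part (i). The description of the action via units of $\Z_\pi$ and the torsion-free refinement are fine as you state them.
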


The proof of Theorem  \ref{main} follows the same strategy as in \cite{WZ-16}, where   the result was proved    in the hyperbolic case: by results of E. Einstein \cite{E} after passing to a finite index subgroup there is a suitable hierarchy which is separable and so it induces  the corresponding action of the profinite completion $\widehat G$  on a profinite tree which is  relatively profinitely acylindrical. 
At this point we make use of the profinite analogue of Bass--Serre theory for groups acting on trees.

\bigskip
The manifolds to which theorem above can be applied are standard arithmetic hyperbolic manifolds.
 An  arithmetic hyperbolic manifold is obtained as a quotient of $SO(n,1)$ by the action of an  arithmetic lattice.  Note that there are standard and non-standard arithmetic subgroups in 
$SO(n , 1)$.  Standard arithmetic lattices  come from quadratic forms $q$ over an algebraic number field $k$ that are  totally real and for one real valuation  $q$ is equivalent over $\R$ to the form $x_1^2  + ..+ x_n^2  - x_{n+1}^2$ and for all other valuations is either positive or negative definite.  
Let $R_k$ be the ring of integers of $k$. A {\it standard arithmetic subgroup} $\Gamma$ of $SO(n,1)$ is a group commensurable with the subgroup $SO(q, R_k)$ of $R_k$-points of $SO(q)= \{X \in SL(n + 1, \C) \mid X^t q X = q \}$. An arithmetic hyperbolic $n$-manifold 
$M = \Hb^n/\Gamma$ is 
called standard arithmetic  if $\Gamma$ is.

 Note also that for $n$ even  all arithmetic subgroups of $SO(n,1)$ are standard.   Cocompact standard arithmetic subgroups of $SO(n,1)$ are virtually compact special by  the  result of Bergeron, Haglund  and Wise \cite[Theorem 1.10]{bergeron} and this is the main reason why  we can apply  the results above to  cocompact standard arithmetic subgroups of $SO(n,1)$.
 
If the standard arithmetic subgroup $G$ is not cocompact then by \cite[Lemma 6.3 and Theorem 7.4]{PS}    
 it posseses  a finite index subgroup $G_0$ that embeds in a virtually compact special group $G'_0$  that  splits as a star of groups whose central vertex group is $G_0$ and all non-central vertex groups are finitely generated free abelian.   This means that $G_0$ is a virtual retract of $G'_0$ and we can apply Theorem  \ref{main} to $G'_0$ and then induce the obtained subgroup structure on $G_0$.
 
  Non-standard arithmetic lattices  come from Hermitian forms and  are not commensurable with standard ones; it is unknown whether they are virtually compact special, so they will not be considered in this paper.

 It is well-known that the fundamental group of a hyperbolic manifold is hyperbolic relative to virtually abelian subgroups. Thus applying Theorem  \ref{main} we deduce

\begin{theorem}\label{thm: 3M Tits alternative}
Let M be any standard hyperbolic arithmetic manifold (possibly with cusps) and $H$  a closed subgroup of $\wh{\pi_1M}$.

\begin{enumerate}
\item[(i)] If $H$
  does not contain a free non-abelian pro-$p$ subgroup for any prime $p$ then either
 $H$ is abelian or   isomorphic to
$\Z_\pi\rtimes \Z_\rho$, where $\pi$ and $\rho$
are (possibly empty) sets of primes with $\pi\cap
\rho=\varnothing$ ( and the action is the multiplication by units).

\item[(ii)] if $H$ is finitely generated pro-$p$ then $H$ is a free
pro-$p$ product of virtually free abelian pro-$p$ groups. Moreover, each non-cyclic virtually abelian free factor of $H$ is conjugate into the profinite completion of a cusp subgroup of $\pi_1M$.
\end{enumerate}
\end{theorem}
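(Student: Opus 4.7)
The plan is to apply Theorem \ref{main} to $G = \pi_1 M$ and translate its structural conclusions into the language of hyperbolic manifolds. The hypotheses of Theorem \ref{main} need to be verified: by \cite[Theorem 1.10]{bergeron}, the standard arithmetic lattice $\pi_1 M$ is virtually compact special; since $M$ is a hyperbolic $n$-manifold, $\pi_1 M$ is hyperbolic relative to its (possibly empty) collection of cusp subgroups, which are fundamental groups of flat $(n-1)$-dimensional cusp cross-sections, hence torsion-free Bieberbach groups, and in particular virtually abelian; and $\pi_1 M$ is itself torsion-free, since it acts freely on $\Hb^n$. Thus parabolic subgroups of $\wh{\pi_1 M}$ are closed subgroups of profinite completions $\wh{C}$ of virtually abelian cusp subgroups $C$.

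For part (i), apply Theorem \ref{main}(i) in the torsion-free case: either $H$ is parabolic, meaning a closed subgroup of some $\wh{C}$, hence (virtually) abelian since $\wh{C}$ is virtually $\wh{\Z^{n-1}}$; or $H \cong \Z_\pi \rtimes \Z_\rho$ with $\Z_\rho$ acting by multiplication by units. Since profinite completions of torsion-free polycyclic groups are torsion-free, no torsion intrudes in the parabolic case, and this yields the stated dichotomy.

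For part (ii), apply Theorem \ref{main}(ii): $H$ is the pro-$p$ fundamental group of a finite graph of pro-$p$ groups with finite edge groups and parabolic vertex groups. Each vertex group is a closed pro-$p$ subgroup of some $\wh{C}$; since $\wh{C}$ is torsion-free and virtually $\wh{\Z^{n-1}}$, each vertex group is torsion-free and virtually $\Z_p^k$ for some $k \le n-1$, i.e., virtually free abelian pro-$p$. Because the vertex groups are torsion-free, any finite edge subgroup embedded in them must be trivial, so the graph splitting reduces to a free pro-$p$ product of the vertex groups together with a free pro-$p$ contribution from the loops of the graph; the latter contributes cyclic $\Z_p$ free factors. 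This exhibits $H$ as a free pro-$p$ product of virtually free abelian pro-$p$ groups, and the non-cyclic free factors, being vertex groups of rank at least two, are parabolic and therefore conjugate into the profinite completion $\wh{C}$ of some cusp subgroup, as claimed. The main obstacle is establishing the torsion-freeness of the $\wh{C}$'s so that the finite edge groups collapse to triviality; this follows from the standard result that profinite completions of torsion-free polycyclic groups are torsion-free, applied to the Bieberbach cusp subgroups.
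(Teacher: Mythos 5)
Your proof is correct and follows essentially the same route as the paper, which simply cites Bergeron--Haglund--Wise for virtual compact specialness, notes that $\pi_1M$ is hyperbolic relative to its (virtually abelian) peripheral subgroups, and invokes Theorem \ref{main}. The additional details you supply --- in particular that the cusp subgroups are torsion-free Bieberbach groups, so their profinite completions and hence the parabolic vertex groups are torsion-free, which forces the finite edge groups of the splitting from Theorem \ref{main}(ii) to be trivial and collapses it to a free pro-$p$ product --- are exactly the translation steps the paper leaves implicit, and they are sound.
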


A standard arithmetic subgroup $\Gamma$ of $SO(n,1)$ might have torsion. In this case  we use the main result of \cite{WEZ-16} to deduce the following description of
  finitely generated pro-$p$ subgroups of $\widehat \Gamma$.

\begin{corollary}\label{pro-p of arithmetic} Let $\Gamma$ be a standard arithmetic lattice of $SO(n,1)$ and $H$ a finitely generated pro-$p$ subgroup of $\widehat \Gamma$. Then $H$ is the pro-$p$  fundamental group of a finite graph of  groups whose edge groups are finite $p$-groups and vertex groups are abelian-by-finite pro-$p$-groups.\end{corollary}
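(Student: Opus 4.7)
The strategy is to apply Theorem~\ref{main}(ii) to $\Gamma$ directly and refine the description of the vertex groups using the virtually abelian structure of the cusp subgroups.

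First I would check the hypotheses. Standard arithmetic lattices of $SO(n,1)$ are virtually compact special by \cite[Theorem~1.10]{bergeron}. Geometrically, $\Gamma$ acts properly discontinuously on $\mathbb{H}^n$ with virtually abelian stabilizers of parabolic fixed points, so it is hyperbolic relative to a finite collection of maximal cusp subgroups $P_1,\ldots,P_k$, each virtually abelian. Theorem~\ref{main}(ii) then decomposes $H$ as the pro-$p$ fundamental group of a finite graph of pro-$p$ groups whose edge groups are finite and whose vertex groups are parabolic, i.e.\ closed pro-$p$ subgroups of conjugates of $\widehat{P_i}$ inside $\widehat{\Gamma}$.

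Since $H$ is pro-$p$, every finite closed subgroup of $H$ is automatically a finite $p$-group, so the edge groups already have the desired form. For the vertex groups I would argue as follows. Fix $i$ and pick a normal free abelian finite-index subgroup $A_i \trianglelefteq P_i$; then $\widehat{A_i} \cong \widehat{\mathbb{Z}}^{d_i}$ is open and normal in $\widehat{P_i}$ with finite quotient $P_i/A_i$. If $V$ is a finitely generated pro-$p$ subgroup of $\widehat{P_i}$, then $V \cap \widehat{A_i}$ is a closed pro-$p$ subgroup of $\widehat{\mathbb{Z}}^{d_i} = \prod_q \mathbb{Z}_q^{d_i}$ and hence sits inside the $p$-component $\mathbb{Z}_p^{d_i}$; in particular it is abelian, and since it is open in $V$ this shows $V$ is abelian-by-finite pro-$p$. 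Applying this to each vertex group, combined with the main result of \cite{WEZ-16}, yields the description of the corollary.

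The step I expect to require the most care is the identification of the vertex groups from Theorem~\ref{main}(ii) as finitely generated pro-$p$ subgroups of the completions $\widehat{P_i}$ of the cusp subgroups. This rests on the separability of the cusp subgroups in $\Gamma$ and on the induced profinite topology agreeing with their intrinsic one --- a point that is built into the definition of ``parabolic subgroup of $\widehat{G}$'' used in Theorem~\ref{main} and, in the presence of torsion in $\Gamma$, is supplied by \cite{WEZ-16}. Once this is granted, the abelian-by-finite conclusion at the vertices reduces to the elementary Sylow analysis of the virtually free abelian profinite group $\widehat{\mathbb{Z}}^{d_i} \rtimes (P_i/A_i)$ sketched above.
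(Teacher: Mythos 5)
Your proposal is correct and follows essentially the same route as the paper: the corollary is obtained by applying Theorem~\ref{main}(ii) to $\Gamma$ (virtually compact special by \cite[Theorem 1.10]{bergeron} and hyperbolic relative to its virtually abelian cusp subgroups), with the main result of \cite{WEZ-16} handling the torsion. Your explicit Sylow-type check that a pro-$p$ subgroup of $\widehat{P_i}$ is abelian-by-finite, and the observation that finite subgroups of a pro-$p$ group are $p$-groups, are exactly the (implicit) refinements the paper relies on.
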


If $\Gamma$ is an arithmetic subgroup of $SO(n,1)$ then one can define the
{\it congruence} topology by taking
the subgroups
$$\Gamma(\alpha)=\{g\in {\bf SL_{n+1}}({R_k})\cap \Gamma\}\mid g\equiv 1 ({\rm mod}\
\alpha)\}$$ corresponding to non-zero ideals $\alpha$ of the ring of integers $R_k$ as
basis of neighbourhoods of the identity.  The completion
$\overline\Gamma$ of $\Gamma$ with respect to this topology is called  the
{\it congruence completion}.
The {\it congruence kernel}
 $C=C(\Gamma)$ is the kernel of
the natural epimorphism
 $\widehat{\Gamma}\longrightarrow \overline\Gamma$.  The congruence subgroup problem in modern understanding (due to J.-P. Serre) is stated as follows:
 
 \bigskip

 {\bf The congruence subgroup
problem} :  Compute or describe the congruence kernel $C$.

\bigskip
The books of Margulis \cite[p. 268]{Mar} and Platonov-Rapinchuk \cite[ Section 9.5]{PR} emphasize the importance of determining the structure of the congruence kernel (A. Lubotzky refers to this as the complete solution of the congruence subgroup problem.)

Bass, Milnor, Serre \cite{BMS} described $C$
for ${\bf G}=SL_n (n\geq 3)$ and $Sp_{2n}(n\geq 2)$. When $C$ is finite, it is central and a general result on the congruence subgroup problem for isotropic groups  is due to Raghunatan \cite{Rihes} and \cite{Rinv}. Melnikov \cite{Me} described the congruence kernel for $SL_2(\Z)$ proving that $C$ is a free profinite group of countable rank. This description was extended in \cite{Z-05} to all arithmetic subgroups of $SL_2(\R)$.  
In \cite{MPSZ} the description of the congruence kernel $C$  was obtained for arithmetic lattices of an algebraic group ${\bf G}(k_v)$, where $k_v$ is the completion of a global field $k$ with respect to a non-archimeadian valuation $v$. Lubotzky \cite{L} proved that if $O$ is the ring of integers of an imaginary quadratic field then the conguence kernel of $SL_2(O)$  contains a free profinite group of countable rank. In \cite{Lu} Lubotzky also proved that $C$ is finitely generated as a normal subgroup of $\widehat\Gamma$ and Prasad-Rapinchuk \cite{PrRa} showed that it is virtually 1-generated as a normal subgroup of the completion of ${\bf G}(k)$. 

\bigskip
We use Corollary \ref{pro-p of arithmetic}  to give a  full description of finitely generated pro-$p$ subgroups in the congruence kernel of any arithmetic lattice of $SL_2(\C)$ and standard arithmetic lattice of $SO(n,1)$.  

\begin{theorem}\label{congruence kernel}  Let $\Gamma$ be a standard arithmetic lattice of $SO(n,1)$ or any arithmetic lattice of $SL_2(\C)$. Let $H$ be a finitely generated pro-$p$ subgroup of $\widehat\Gamma$. Then $C\cap H$ is free pro-$p$. In particular, every finitely generated pro-$p$ subgroup of the congruence kernel $C$ of $\Gamma$ is free pro-$p$.

\end{theorem}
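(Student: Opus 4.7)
My plan is to leverage the graph-of-pro-$p$-groups decomposition of $H$ furnished by Corollary \ref{pro-p of arithmetic} and to restrict the associated action on the pro-$p$ Bass--Serre tree $T$ to the subgroup $C\cap H$. I will verify that this restricted action has trivial vertex and edge stabilizers, which by the pro-$p$ analogue of Bass--Serre theory forces $C\cap H$ to be a free pro-$p$ group.

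First I would apply Corollary \ref{pro-p of arithmetic} (for the $SO(n,1)$ case directly; for an arithmetic lattice of $SL_2(\C)$, one descends to $SO(3,1)$ via the natural isogeny, where the lattice is virtually compact special by Wise--Agol) to realize $H$ as the pro-$p$ fundamental group of a finite graph of pro-$p$ groups whose edge groups are finite $p$-groups and whose vertex groups $V$ are abelian-by-finite pro-$p$ groups sitting inside profinite completions $\widehat{P}$ of cusp (parabolic) subgroups $P\leq\Gamma$. Let $T$ be the associated pro-$p$ Bass--Serre tree carrying the action of $H$ with these stabilizers.

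Next I would show that every vertex stabilizer of the restricted action of $C\cap H$ on $T$ is trivial. A cusp subgroup $P\leq\Gamma$ is virtually $\Z^{n-1}$, and for a finitely generated virtually free-abelian group every finite-index subgroup is a congruence subgroup of $\Gamma$ (the easy abelian CSP, essentially Chinese Remainder inside $R_k$-modules), so the composition $\widehat{P}\into\widehat\Gamma\to\overline\Gamma$ is injective. In particular $\widehat{P}\cap C = 1$, hence $V\cap C = 1$ for every vertex group. For the edge stabilizers, which are finite $p$-groups $F$, I would invoke the goodness (in Serre's sense) of virtually compact special groups, by which every finite subgroup of $\widehat\Gamma$ is conjugate into $\Gamma$; combining this with $\Gamma\cap C=1$ and the normality of $C$ in $\widehat\Gamma$ yields $F\cap C=1$.

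Having trivial vertex and edge stabilizers, $C\cap H$ acts freely on the pro-$p$ tree $T$. By the pro-$p$ version of Bass--Serre theory (Ribes--Zalesskii), a pro-$p$ group acting freely on a pro-$p$ tree is free pro-$p$; hence $C\cap H$ is free pro-$p$. The ``in particular'' statement is immediate, since a finitely generated pro-$p$ subgroup $H\leq C$ coincides with $C\cap H$. The main obstacle I anticipate is the vertex-stabilizer step: one must argue that each vertex group $V$ produced by Corollary \ref{pro-p of arithmetic} genuinely sits inside $\widehat{P}$ for some cusp $P$ (up to conjugation in $\widehat\Gamma$), and that the injectivity $\widehat{P}\into\overline\Gamma$ then restricts to $V$; the edge case is more routine but still requires the nontrivial fact that finite subgroups of $\widehat\Gamma$ are conjugate into $\Gamma$.
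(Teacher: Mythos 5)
Your proposal is correct in substance and rests on the same two pillars as the paper's proof: the decomposition of $H$ coming from Theorem \ref{main}/Corollary \ref{pro-p of arithmetic}, and the fact that the congruence topology induces the full profinite topology on (virtually) abelian subgroups of $\Gamma$, so that $C$ meets the peripheral pieces trivially; both arguments then finish with pro-$p$ Bass--Serre theory. The routes diverge in how torsion is handled. The paper first replaces $\Gamma$ by a torsion-free congruence subgroup with abelian peripheral subgroups (legitimate because $C$ and $C\cap H$ are unchanged under this passage), so that $H$ becomes a free pro-$p$ product of free abelian groups and the conclusion follows from the pro-$p$ Kurosh subgroup theorem; no finite subgroups ever appear. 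You instead keep the graph-of-groups decomposition with finite edge groups and kill their intersection with $C$ via goodness of $\Gamma$ (finite subgroups of $\widehat\Gamma$ being conjugate into $\Gamma$). That works, but it imports a nontrivial extra ingredient that the paper's reduction renders unnecessary; note also that once the vertex stabilizers of $C\cap H$ are trivial the edge stabilizers are automatically trivial, so that step of yours is redundant. Two caveats on the remaining soft spots. First, the vertex groups produced by Corollary \ref{pro-p of arithmetic} are only \emph{virtually} parabolic, so the containment $V\leq\widehat{P}^{\,g}$ you rely on must be weakened to finite index, and the leftover finite piece again needs the goodness argument (or the paper's torsion-free reduction, which dissolves the issue). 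Second, for the $SL_2(\C)$ case you should not route through ``standard'' lattices of $SO(3,1)$: the image of an arithmetic lattice of $SL_2(\C)$ under the isogeny need not be standard (quaternion-algebra lattices are not), which is exactly why the paper invokes \cite[Theorem 4.6]{henry} there; what you actually need is only that such lattices are virtually compact special and relatively hyperbolic, so Theorem \ref{main} applies directly.
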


\subsection*{Acknowledgments} I  thank Andrei Rapinchuk for many valuable discussions on arithmetic groups.

\section{Preliminaries}

 In this section, we recall the necessary elements of  the profinite version of  Bass-Serre's theory. We refer to \cite{R 2017} for further details and to \cite{RZ-00} for the pro-$p$ version of it.

A \emph{graph} $\Gamma$ is a disjoint  union $E(\Gamma) \cup V(\Gamma)$
with two maps $d_0, d_1 : \Gamma \to V(\Gamma)$ that are the
identity on the set of vertices $V(\Gamma)$.  For an element $e$ of
the set of edges  $E(\Gamma)$, $d_0(e) $ is called the \emph{initial} and
$d_1(e) $ the \emph{terminal} vertex of $e$.

\begin{definition}
A \emph{profinite graph} $\Gamma$ is a graph such that:
\begin{enumerate}
\item $\Gamma$ is a profinite space (i.e.\ an inverse limit of finite
discrete spaces);
\item $V(\Gamma)$ is closed; and
\item the maps $d_0$ and $d_1$
are continuous.
\end{enumerate}
Note that $E(\Gamma)$ is not necessary closed.
\end{definition}

A \emph{morphism} $\alpha:\Gamma\longrightarrow \Delta$ of profinite graphs is a continuous map with $\alpha d_i=d_i\alpha$ for $i=0,1$. A profinite graph is called connected if every finite quotient graph of it is connected (as usual graph).

By \cite[Prop.~1.7]{ZM-88} every profinite
graph $\Gamma$ is an inverse limit of finite quotient graphs of
$\Gamma$.

For a profinite space $X$  that is the inverse limit of finite
discrete spaces $X_j$, $[[\widehat{\mathbb{Z}} X]]$ is the inverse
limit  of $ [\widehat{\mathbb{Z}} X_j]$, where
$[\widehat{\mathbb{Z}} X_j]$ is the free
$\widehat{\mathbb{Z}}$-module with basis $X_j$. For a pointed
profinite space $(X, *)$ that is the inverse limit of pointed
finite discrete spaces $(X_j, *)$, $[[\widehat{\mathbb{Z}} (X,
*)]]$ is the inverse limit  of $ [\widehat{\mathbb{Z}} (X_j, *)]$,
where $[\widehat{\mathbb{Z}} (X_j, *)]$ is the free
$\widehat{\mathbb{Z}}$-module with basis $X_j \setminus \{ *
\}$ \cite[Chapter~5.2]{RZ-10}.

For a profinite graph $\Gamma$, define the pointed space
$(E^*(\Gamma), *)$ as  $\Gamma / V(\Gamma)$, with the image of
$V(\Gamma)$ as a distinguished point $*$, and denote the image of $e\in E(\Gamma)$ by $\bar{e}$.  By definition  a profinite
tree  $\Gamma$ is a profinite graph with a short exact sequence
$$
0 \to [[\widehat{\mathbb{Z}}(E^*(\Gamma), *)]]
\stackrel{\delta}{\rightarrow} [[\widehat{\mathbb{Z}} V(\Gamma)]]
\stackrel{\epsilon}{\rightarrow} \widehat{\mathbb{Z}} \to 0
$$
where $\delta(\bar{e}) = d_1(e) - d_0(e)$ for every $e \in E(\Gamma)$ and $\epsilon(v) = 1$ for every $v \in V(\Gamma)$.  If $v$  and $w$ are elements  of a profinite tree   $T$, we denote by $[v,w]$ the smallest profinite subtree of $T$ containing $v$ and $w$ and call it a \emph{geodesic}.

A profinite graph is called a \emph{pro-$p$ tree} if one has the following exact sequence:

$$
0 \to [[\mathbb{F}_p(E^*(\Gamma), *)]]
\stackrel{\delta}{\rightarrow} [[\mathbb{F}_p V(\Gamma)]]
\stackrel{\epsilon}{\rightarrow} \mathbb{F}_p \to 0
$$
where $\delta(\bar{e}) = d_1(e) - d_0(e)$ for every $e \in E(\Gamma)$ and $\epsilon(v) = 1$ for every $v \in V(\Gamma)$. Note that any profinite tree is a pro-$p$ tree, but the converse is not true.  In fact, a connected profinite graph is a profinite tree if and only if it is a pro-$p$ tree for every prime $p$.

By definition, a profinite group $G$ \emph{acts} on a profinite graph
$\Gamma$ if  we have a continuous action of $G$ on the profinite
space $\Gamma$ that commutes with the maps $d_0$ and $d_1$.

 When we say that ${\cal G}$ is a \emph{finite graph of profinite groups}, we mean that it contains the data of the
underlying finite graph, the edge profinite groups, the vertex profinite groups and the attaching continuous maps. More precisely,
let $\Delta$ be a connected finite graph.  The data of a graph of profinite groups $({\cal G},\Delta)$ over
$\Delta$ consists of a profinite group ${\cal G}(m)$ for each $m\in \Delta$, and continuous monomorphisms
$\partial_i: {\cal G}(e)\longrightarrow {\cal G}(d_i(e))$ for each edge $e\in E(\Delta)$.

The definition of the profinite fundamental  group of a connected
profinite graph of profinite groups is quite involved (see
\cite{ZM-89}). However, the profinite fundamental  group
$\Pi_1(\G,\Gamma)$ of a finite graph of finitely generated
profinite groups $(\G, \Gamma)$ can be defined as the profinite completion
of the abstract (usual) fundamental group $\Pi_1^{abs}(\G,\Gamma)$
(we use here that every subgroup of finite index in a finitely
generated profinite group is open \cite[Theorem 1.1 ]{NS-07}).  The fundamental profinite group
$\Pi_1(\G,\Gamma)$
has the following presentation:
\begin{eqnarray}\label{presentation}
\Pi_1(\mathcal{G}, \Gamma)&=&\langle \G(v), t_e\mid rel(\G(v)),
\partial_1(g)=\partial_0(g)^{t_e}, g\in \G(e),\nonumber\\
 & &t_e=1 \  {\rm for}\ e\in
T\rangle;
\end{eqnarray}
here $T$ is a maximal subtree of $\Gamma$ and
$\partial_0:\G(e)\longrightarrow
\G(d_0(e)),\partial_1:\G(e)\longrightarrow \G(d_1(e))$ are
monomorphisms.

In contrast to the abstract case, the vertex groups of $(\G,
\Gamma)$ do not always embed in $\Pi_1(\G,\Gamma)$, i.e.,
$\Pi_1(\G,\Gamma)$ is not always proper. However, the edge and
vertex groups can be replaced by their images in
$\Pi_1(\G,\Gamma)$ and after this replacement $\Pi_1(\G,\Gamma)$
becomes proper. Thus from now on we shall assume that
$\Pi_1(\G,\Gamma)$ is always proper, unless otherwise stated.

To obtain the definition of the fundamental pro-$p$ group of a finite graph of finitely generated pro-$p$ groups one simply replaces `profinite' by `pro-$p$' in the above definition.

The profinite (resp.\ pro-$p$) fundamental  group $\Pi_1(\G,\Gamma)$ acts on the standard profinite (resp.\
pro-$p$) tree $S$ (defined analogously to the abstract one)
associated to it, with vertex and edge stabilizers being conjugates
of vertex and edge groups, and such that
$S/\Pi_1(\G,\Gamma)=\Gamma$  \cite[Proposition 3.8]{ZM-88}.

\begin{example}\label{graph group completion} If $G=\pi_1(\G,\Gamma)$ is the fundamental group of a finite graph of (abstract) groups then one has the induced graph of profinite completions of edge  and vertex groups $(\widehat\G,\Gamma)$ (not necessarily proper) and  a natural homomorphism $G=\pi_1(\G,\Gamma)\longrightarrow \Pi_1(\widehat\G,\Gamma)$. It is an embedding if $\pi_1(\G,\Gamma)$ is residually finite. In this case $\Pi_1(\widehat\G,\Gamma)=\widehat{G}$ is simply the profinite completion.
Moreover,
 the standard tree $S(G)$
  naturally embeds in $S(\widehat G)$ if and only if  the edge and vertex groups $\G(e)$, $\G(v)$     are separable in  $\pi_1(\G,\Gamma)$, or equivalently $\G(e)$ are closed in $\G(d_0(e))$,   $\G(d_1(v))$    with
respect to the topology induced by the profinite topology on $G$
\cite[Proposition 2.5]{CB-13}.  In particular, this is the case  if vertex and edge groups are finitely generated and $G$ is subgroup separable.\end{example}

\begin{definition}\label{def: Profinite acylindrical}
The action of a profinite group $\widehat{\Gamma}$ on a profinite (or pro-$p$) tree $T$ is said to be \emph{$k$-acylindrical}, for $k$ a constant, if the set of fixed points of $\gamma\in\widehat\Gamma$ has diameter at most $k$ whenever $\gamma\neq 1$.
\end{definition}

The next theorem is the main ingredient for proving (ii) of Theorems \ref{main} and \ref{thm: 3M Tits alternative}.

\begin{theorem}\cite[Theorem 3.8]{henry}\label{general acylindrical} Let $G$ be a finitely generated pro-$p$
 group acting $k$-acylindrically on a profinite tree $T$. Then $G$ is a free pro-$p$ product of vertex stabilizers and a free pro-$p$ group.\end{theorem}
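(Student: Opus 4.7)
The plan is to use the pro-$p$ version of Bass--Serre theory to realise $G$ as the fundamental pro-$p$ group of a finite graph of pro-$p$ groups, and then to use $k$-acylindricity to force the edge groups in this decomposition to be trivial. Since any profinite tree is automatically a pro-$p$ tree, I would view $T$ as a pro-$p$ tree on which $G$ acts, and replace $T$ by the minimal $G$-invariant pro-$p$ subtree $D$ --- this exists because $G$ is finitely generated as a pro-$p$ group, so the $G$-translates of geodesics from a basepoint to the images of a finite generating set suffice. The pro-$p$ Bass--Serre theory of \cite{RZ-00, ZM-89} then writes $G = \Pi_1^{\,p}(\mathcal{G},\Gamma)$ for a graph of pro-$p$ groups over $\Gamma = D/G$, with vertex and edge groups equal to the corresponding stabilizers in $D$. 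A standard inverse-limit argument, using that $G$ is topologically finitely generated, lets me reduce to the situation where $\Gamma$ is a finite graph.

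Next I would exploit $k$-acylindricity to eliminate non-trivial edge groups from this decomposition. By definition, any non-identity element of $G$ fixes a pro-$p$ subtree of $D$ of diameter at most $k$, so the intersection of the stabilizers of any two vertices at distance greater than $k$ is trivial, and in particular successive intersections of edge stabilizers along sufficiently long paths vanish. Combining this with the pro-$p$ hypothesis (edge groups are themselves pro-$p$, and any finite quotient is a $p$-group), I would collapse or refine the quotient graph $\Gamma$ so that the resulting graph of pro-$p$ groups has trivial edge groups. Once this is achieved, the pro-$p$ fundamental group of a finite graph of pro-$p$ groups with trivial edge groups is by the very presentation~(\ref{presentation}) a free pro-$p$ product of the vertex groups together with a free pro-$p$ group of rank equal to the first Betti number of $\Gamma$, which is the desired conclusion.

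The main obstacle is the collapsing step. Unlike the abstract setting, no direct Stallings-fold machinery is available for pro-$p$ groups acting on pro-$p$ trees, so one must proceed indirectly through the inverse system of finite quotients of $G$. The key technical point is to verify that the $k$-acylindrical action on $T$ descends to a uniformly $k$-acylindrical action on each finite quotient tree, so that folds performed at finite level are compatible under the inverse limit, and to check that the pro-$p$ fundamental group of the resulting limit graph of pro-$p$ groups is genuinely a free pro-$p$ product rather than a more complicated iterated amalgamated product. Arguing that finite generation of $G$ is preserved throughout, and that the vertex groups appearing in the final free product are conjugate to vertex stabilizers in the original $T$, is the technical heart of the proof.
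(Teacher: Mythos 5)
The paper itself gives no proof of this statement: it is quoted verbatim from \cite{henry}, so your attempt has to be measured against the argument there. Your outline contains two genuine gaps, and they sit exactly at the points where the pro-$p$ theory diverges from classical Bass--Serre theory. The first is the claim that pro-$p$ Bass--Serre theory writes $G$ as the fundamental pro-$p$ group of a graph of groups over $\Gamma=D/G$, and that topological finite generation of $G$ lets you assume $\Gamma$ is finite. Neither half is available. The ``converse'' direction of profinite Bass--Serre theory --- recovering $G$ as $\Pi_1(\mathcal G, G\backslash T)$ from an action on a profinite tree --- holds only under strong hypotheses (finite quotient graph, existence of a continuous fundamental domain) and fails in general; and finite generation of $G$ does not make $D/G$ finite, nor is there a ``standard inverse-limit argument'' reducing to that case. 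This failure is precisely why structure theorems for profinite groups acting on profinite trees are proved by different (largely homological) methods. The tool that actually powers the theorem is the Ribes--Zalesskii structure theorem for finitely generated pro-$p$ groups acting on pro-$p$ trees with \emph{trivial} edge stabilizers \cite{RZ-00}, which directly yields a free pro-$p$ product of vertex stabilizers (indexed a priori by a profinite space) together with a free pro-$p$ factor, with no graph-of-groups decomposition appearing at any stage.

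The second gap is the collapsing step, which you rightly identify as the main obstacle, but the mechanism you propose for it cannot work. Acylindricity is a property of the limit tree only: the finite quotients of $T$ are finite graphs rather than trees, the induced actions of the finite quotients of $G$ on them are nowhere near acylindrical (every element fixes a great deal), and there is no inverse system of ``folds at finite level'' whose compatibility one could check. The argument in \cite{henry} instead works entirely upstairs in $T$: roughly, by $k$-acylindricity the subtrees spanned by edges with nontrivial stabilizer have diameter at most $k$, they are permuted by $G$ and form a profinite family, and collapsing each of them produces a new pro-$p$ tree on which $G$ acts with trivial edge stabilizers; one then applies the trivial-edge-stabilizer theorem and handles the new vertex stabilizers (stabilizers of the collapsed subtrees) by induction on $k$. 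Without an argument of this kind, carried out in $T$ itself rather than in its finite quotients, your outline reduces the theorem to a step that is exactly as hard as the theorem.
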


\section{Pro-$p$ subgroups of profinite completions of relatively hyperbolic virtually  special groups}

We begin the section with stating necessary definitions and results on relatively hyperbolic and virtually compact special groups.

\begin{definition} A hierarchy of groups of length 0 is a single vertex labeled by a group.
A hierarchy of groups of length $n$ is a graph of groups 
$(\G_n,\Gamma_n)$ together with hierarchies of length $n-1$ on 
each vertex of $\Gamma_n$. If $\curlyH$ is a length $n$ hierarchy of 
groups, the $n$-th level  of $\curlyH$ is the graph of groups 
$(\G_n,\Gamma_n)$.  For $1\leq k\leq n$,  the $(n-k)$-th  level  of $\curlyH$ is  the  disjoint  union  of  the $(n-k)$-th levels of the hierarchies on the vertices of $\Gamma_n$. The terminal groups are the groups labeling the vertices at level 0.
\end{definition}

\begin{theorem}\cite[Theorem 1]{E} \label{einstein} Let $(G,\curlyP)$ be  a   relatively hyperbolic  pair  and  let $G$ be  a  virtually compact  special  group.   Then  there  exists  a  finite  index  subgroup $G_0\leq G$ and  an induced relatively hyperbolic pair $(G_0,\curlyP_0)$ so that $G_0$ has a quasi-convex, malnormal and fully $\curlyP_0$-elliptic hierarchy terminating in groups isomorphic to elements of $\curlyP_0$.
\end{theorem}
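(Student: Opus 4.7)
The plan is to adapt Wise's Quasiconvex Hierarchy Theorem, which handles the absolutely hyperbolic virtually compact special case, to the relatively hyperbolic setting while keeping the peripheral subgroups $\curlyP$ intact as elliptic subgroups at every level. The absolute version proceeds by induction on a complexity associated with a quasi-convex malnormal hierarchy whose terminal groups are trivial or finite; the relative version must instead terminate at the peripherals themselves, so the whole argument must be rephrased in the ``relative to $\curlyP$'' category (relative quasi-convexity, relative malnormality, fully $\curlyP$-elliptic splittings).

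First I would replace $G$ by a finite index subgroup $G_0$ that is compact special, choosing it small enough that the induced peripheral structure $\curlyP_0$ consists of the intersections of conjugates of $\curlyP$ with $G_0$ and that every $P \in \curlyP_0$ is a virtual retract and embeds as a convex subcomplex of the associated nonpositively curved cube complex $X_0 = G_0 \backslash \widetilde{X}$. The central tool is then a \emph{relatively hyperbolic Dehn filling} argument (Groves--Manning, Osin): by filling each $P \in \curlyP_0$ through very deep finite-index normal subgroups one obtains hyperbolic virtually compact special quotients of $G_0$ to which Wise's MSQT and absolute quasi-convex hierarchy apply. Pulling back the splittings of the quotient along the filling map and pairing this with Agol's RFRS / separability machinery produces a relative splitting of $G_0$ over a relatively quasi-convex, relatively malnormal, codimension-one subgroup that is fully $\curlyP_0$-elliptic.

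The hierarchy is then built by induction on a suitable complexity (for example, the number of hyperplanes of $X_0$ together with the cusped complexity of Groves--Manning). At each stage, the vertex groups of the splitting inherit an induced relatively hyperbolic structure in which every new peripheral is either conjugate to an element of $\curlyP_0$ or has strictly smaller complexity; the fully $\curlyP_0$-elliptic condition guarantees that parabolics are never cut. Iterating, the process terminates in groups conjugate to elements of $\curlyP_0$, as required.

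The main obstacle, and the reason this is substantially more than a cosmetic adaptation of Wise, is simultaneously controlling \emph{three} properties at every step: relative quasi-convexity, relative malnormality, and full $\curlyP_0$-ellipticity. Preserving malnormality after combining vertex groups requires a relative version of the malnormal special quotient theorem, and ensuring that the codimension-one walls chosen to produce each splitting never cross the parabolics forces one to work inside cusped cube complexes and invoke sparse-canonical completion style arguments. Once this package is in place, the induction proceeds as in the absolute case.
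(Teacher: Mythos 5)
This statement is not proved in the paper at all: it is Einstein's theorem, imported verbatim as \cite[Theorem 1]{E}, and the author's only obligation here is the citation. So there is no in-paper argument to compare against; what can be assessed is whether your sketch would stand on its own, and it would not in its present form.

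The concrete gap is the step where you ``pull back the splittings of the quotient along the filling map.'' A quasi-convex hierarchy of a Dehn-filled quotient $G_0/K$ gives an action of $G_0$ on a tree in which the filling kernel $K$ acts trivially, so every vertex and edge stabilizer of the pulled-back action contains $K$. Since $K$ is the normal closure of deep finite-index subgroups of the peripherals, it is infinite and not relatively quasi-convex, so the resulting splitting of $G_0$ has neither quasi-convex edge groups nor any chance of being fully $\curlyP_0$-elliptic. Einstein's actual construction does not transport hierarchies through fillings: he cuts the (relatively cocompact) cube complex along hyperplanes directly, using a relative malnormal special quotient theorem and separability only to arrange, in a finite cover, that the hyperplane subgroups are embedded, full relatively quasi-convex and malnormal; the induction is then on the number of hyperplanes. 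Beyond this, your sketch asserts rather than proves the two points that carry all the weight --- that the induced peripheral structure on vertex groups has strictly smaller complexity (termination), and that relative quasi-convexity, relative malnormality and full $\curlyP_0$-ellipticity can be preserved simultaneously at every level --- and the appeal to Agol's RFRS machinery is out of place here (RFRS concerns virtual fibering, not hierarchies). As a roadmap your proposal gestures at the right circle of ideas, but the central mechanism is wrong and the remainder is a statement of what must be shown rather than a proof.
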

\medskip

\begin{remark}\label{separable hierarchy} The Einstein  construction of the hierarchy ensures that the edge groups (the group that you are amalgamating over or taking the HNN extension over) are "full relatively quasi-convex" and hence are separable by \cite[Proposition 7.21]{E}.
\end{remark}

\begin{definition}
Suppose that a group $G$ is hyperbolic relative to a collection of parabolic subgroups $\{P_1,\ldots,P_n\}$.  A subgroup $H$ of $G$ is called \emph{relatively malnormal} if, whenever an intersection of conjugates $H^\gamma\cap H$ is not conjugate into some $P_i$, we have $\gamma\in H$.
\end{definition}

\begin{theorem}(\cite[Theorem 4.2]{WZ-16})\label{thm: Profinite relative malnormality}
Suppose that $G$ is a virtually compact special group, which is also  relatively hyperbolic with parabolic subgroups $P_1,\ldots,P_n$.  Let $H$ be a subgroup which is relatively malnormal and relatively quasi-convex.  Then $\wh{H}$ is also a relatively malnormal subgroup of $\wh{G}$, in the sense that
\[
\wh{H}\cap\wh{H}^{\hat{\gamma}}\leq \wh{P}_i^{\hat g}
\]
(for some $i$ and $\hat g\in \widehat G$) whenever $\wh{\gamma}\notin\wh{H}$.
\end{theorem}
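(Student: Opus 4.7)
The plan is to adapt the approach of \cite[Theorem 4.2]{WZ-16} from the hyperbolic to the relatively hyperbolic setting, using the separability results available for relatively hyperbolic virtually compact special groups. The key inputs will be: separability of $H$, separability of the parabolic subgroups $P_i$, and an approximation argument that reduces profinite relative malnormality to the abstract one.

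First I would note that $H$ is separable in $G$, as are the $P_i$: relatively quasi-convex subgroups of relatively hyperbolic virtually compact special groups are separable, and by Remark~\ref{separable hierarchy} full relatively quasi-convex subgroups (the $P_i$ in particular) are separable as well. Consequently $\widehat H$ is exactly the closure of $H$ in $\widehat G$, and similarly for each $\widehat P_i$; moreover $H\cap N$ is dense in $\widehat H$ for a cofinal family of finite-index normal subgroups $N\trianglelefteq G$.

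Second, I would argue by contradiction and approximation. Fix $\hat\gamma\notin \widehat H$, and suppose for contradiction that there is $\hat h\in \widehat H\cap \widehat H^{\hat\gamma}$ lying in no conjugate $\widehat P_i^{\hat g}$. Separability of $H$ produces a finite quotient $\pi\colon \widehat G\to Q$ in which $\pi(\hat\gamma)\notin \pi(H)$; refining $Q$ further using separability of the $P_i$ one can also arrange that $\pi(\hat h)$ avoids $\pi(P_i^g)$ for all the finitely many double-coset classes of conjugates of the $P_i$ that matter. Lifting $\hat h$ and $\hat\gamma$ to elements of $G$ lying in the corresponding cosets yields sequences $h_n\in H$ and $\gamma_n\in G\setminus H$ with $h_n^{\gamma_n}\in H$ and $h_n$ not conjugate into any $P_i$.

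Third, I would invoke the abstract relative malnormality: for each $n$, the pair $(h_n,\gamma_n)$ satisfies $h_n\in H\cap H^{\gamma_n^{-1}}$ with $h_n$ outside every parabolic conjugate, so by hypothesis $\gamma_n\in H$, contradicting the separation in the previous step. This gives the required containment $\widehat H\cap \widehat H^{\hat\gamma}\leq \widehat P_i^{\hat g}$ for some $i$ and $\hat g$.

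The main obstacle will be the second step: producing a single finite quotient that simultaneously separates $\pi(\hat h)$ from every relevant conjugate of every $P_i$. This requires separability of the double cosets $HgP_i$, which is where the virtually compact special hypothesis enters essentially: Wise's machinery and its relative extensions used in Theorem~\ref{einstein} provide exactly this double-coset separability, so that the abstract relative malnormality of $H$ transfers faithfully to the profinite closure $\widehat H$.
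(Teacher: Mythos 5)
This theorem is not proved in the paper at all: it is imported verbatim as \cite[Theorem 4.2]{WZ-16}, so there is no in-paper argument to compare against, and your proposal has to be judged on its own terms as a reconstruction of the external proof. As such, it contains a genuine gap at the decisive step. In your second step you pass to a finite quotient $\pi\colon\wh G\to Q$ and ``lift'' $\hat h$ and $\hat\gamma$ to $h_n\in H$ and $\gamma_n\in G\setminus H$ \emph{with} $h_n^{\gamma_n}\in H$. But the finite quotient only tells you that $\pi(h_n^{\gamma_n})=\pi(\hat h^{\hat\gamma})\in\pi(H)$, i.e.\ $h_n^{\gamma_n}\in H\ker\pi$, which is much weaker than $h_n^{\gamma_n}\in H$; and since the pair $(h_n,\gamma_n)$ changes with $\pi$, you cannot intersect over all finite quotients to upgrade $H\ker\pi$ to $\overline H\cap G=H$. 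Consequently step three never gets off the ground: abstract relative malnormality says nothing about $H\cap (H\ker\pi)^{\gamma_n}$. This is exactly the reason these profinite malnormality statements are nontrivial --- the closed set $\{(\hat x,\hat y)\in\wh H\times\wh G:\hat x^{\hat y}\in\wh H\}$ contains $(\hat h,\hat\gamma)$ but need not contain any point of $H\times G$ near it unless one knows in advance that the relevant double cosets $H\gamma H$ are separable and, crucially, that relative quasi-convexity forces the ``bad'' intersections into finitely many such double cosets. Your closing paragraph correctly senses that some double-coset separability is needed, but it points at $HgP_i$ rather than at $H\gamma H$, and separability alone does not repair the lifting step without the accompanying finiteness/geometric input.

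There is also a smaller logical slip at the start of step two: the negation of ``$\wh H\cap\wh H^{\hat\gamma}\leq\wh P_i^{\hat g}$ for some $i$ and $\hat g$'' is not ``there exists $\hat h\in\wh H\cap\wh H^{\hat\gamma}$ lying in no conjugate $\wh P_i^{\hat g}$''; a priori every element of the intersection could lie in \emph{some} parabolic conjugate while the intersection is contained in no single one. (By contrast, the part of your argument you flagged as the obstacle --- separating $\hat h$ from \emph{all} conjugates of all $P_i$ at once --- is actually fine, since $\bigcup_{i}\bigcup_{\hat g}\wh P_i^{\hat g}$ is a continuous image of a compact set, hence closed, and no double-coset separability is needed there.)
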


We are now ready to prove Theorem \ref{main}.

\begin{proof}[Proof of Theorem \ref{main}] Let $G$ be a relatively hyperbolic virtually compact special   group. By  Theorem  \ref{einstein} there exists a finite index subgroup $G_0$ of $G$ that admits a quasi-convex malnormal hierarchy terminating in parabolic subgroups and by Remark \ref{separable hierarchy} the edge groups (the group that you are amalgamating over or taking the HNN extension over)  are separable.  Therefore, by Example \ref{graph group completion} $\widehat G_0$ acts on a profinite tree and its edge stabilizers are malnormal by Theorem \ref{thm: Profinite relative malnormality}, i.e. the action is 1-acylidrical. Put $H_0=\widehat G_0\cap H$. 

(i)
 We use  induction on the length of the malnormal hierarchy.   By the inductive hypothesis, the vertex stabilizers in $H_0$ have the claimed structure. Therefore, if $H_0$ stabilizes a vertex by the induction hypothesis we are done. Otherwise, by \cite[Remark 5.3]{WZ-16}, $H_0$ has the claimed structure.

\medskip
(ii) 
 By Theorem \ref{general acylindrical} $H_0$ is a free pro-$p$ product of stabilizers of vertices and a free pro-$p$ group. Applying
Theorem \ref{general acylindrical} inductively to free factors which are vertex stabilizers we end up with the free pro-$p$ product of parabolic pro-$p$  subgroups and a free pro-$p$ group. Then by the main result of \cite{WEZ-16} $H$ is the fundamental group of a finite graph of pro-$p$ groups with finite edge groups  and virtually parabolic vertex groups (the latter follows from its restatement   \cite[Theorem 4.11]{shusterman}).

\end{proof}

\bigskip

Now we turn to the proof of Theorem \ref{relative hypebolization}. Note  that a group is virtually special if and only if it  virtually embeds in Right angled Artin group  (RAAG) and is virtually compact special if and only if it is a virtually virtual retract  of RAAG (see \cite{HW}). We recall a definition of RAAG. Let $\Gamma$ be a finite simplicial graph, and let $V(\Gamma)$ and
$E(\Gamma)$ be the sets of vertices and edges of $\Gamma$
respectively. The right  angled Artin  group $R$, associated to
$\Gamma$, is given by the presentation $ R :=\{V\mid uv = vu\}$,
whenever $u, v \in V$ are ends of an edge $e\in E(\Gamma)$. In the
literature, right  angled Artin groups are also called graph
groups or partially commutative groups.

We shall use it to prove the following lemma.

\begin{lemma}\label{free} Let $G$ be a right angled Artin group  and $F$ a free non-abelian subgroup of $G$. Then the closure $\overline F$ of $F$ in $\widehat G$ contains a free non-abelian pro-$p$ subgroup for some prime $p$. In particular the statement is true for virtually  special groups.

\end{lemma}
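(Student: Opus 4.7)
The plan is to use that RAAGs are residually $p$ for every prime $p$, pass to the pro-$p$ completion $G_{\widehat p}$ to produce a free pro-$p$ group there, and then lift back to $\widehat G$ via profinite Sylow theory and the projectivity of free pro-$p$ groups.

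First, I would replace $F$ by a two-generator free non-abelian subgroup $F_2=\langle a,b\rangle$. Then I would invoke the classical fact (Duchamp--Krob, via residual torsion-free nilpotence) that every RAAG is residually $p$ for every prime $p$; fixing such a prime, the canonical surjection $\widehat G\twoheadrightarrow G_{\widehat p}$ restricts to an injection on $G$, hence on $F_2$.

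The crucial technical step is to verify that the closure of $F_2$ in $G_{\widehat p}$ is the free pro-$p$ group of rank $2$. Equivalently, the canonical continuous homomorphism $F_2^{\widehat p}\to G_{\widehat p}$ is injective, which in turn means that the pro-$p$ topology induced on $F_2$ from $G$ coincides with the intrinsic pro-$p$ topology on $F_2$. For RAAGs this follows from the residually-torsion-free-nilpotent structure (via Magnus-type embeddings of $G$ into inverse limits of torsion-free nilpotent groups), which transports the lower central $p$-filtration of $F_2$ faithfully into $G$; this is the main technical obstacle.

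Once this is in hand, the lift is immediate: the surjection $\overline F\twoheadrightarrow F_2^{\widehat p}$ sends any pro-$p$ Sylow $S$ of $\overline F$ onto a pro-$p$ Sylow of $F_2^{\widehat p}$, which is the whole group, so $S$ admits a continuous surjection onto $F_2^{\widehat p}$. Since free pro-$p$ groups are projective in the category of pro-$p$ groups, this surjection splits, yielding a continuous embedding $F_2^{\widehat p}\hookrightarrow S\leq\overline F$, the desired free non-abelian pro-$p$ subgroup. For the virtually special case, one would pass to a finite-index subgroup that embeds in a RAAG, apply the argument above, and transfer the conclusion back via the fact that such a subgroup corresponds to an open subgroup of the profinite completion.
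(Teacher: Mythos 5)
Your overall strategy is genuinely different from the paper's, and the final lifting step is fine: a $p$-Sylow subgroup of $\overline F$ maps onto the ($p$-Sylow of the) pro-$p$ image, and projectivity of free pro-$p$ groups splits that surjection. The problem is the step you yourself flag as ``the main technical obstacle'': you need the closure of $F_2$ in the pro-$p$ completion $G_{\hat p}$ to be a non-abelian free pro-$p$ group (equivalently, you need $F_2^{\hat p}\to G_{\hat p}$ to be injective, i.e.\ the pro-$p$ topology of $G$ to induce on $F_2$ its \emph{full} pro-$p$ topology), and the justification you offer does not establish this. Residual torsion-free nilpotence of RAAGs (Duchamp--Krob) gives exactly that $G$, hence $F_2$, injects into $G_{\hat p}$; it says nothing about whether every normal subgroup of $p$-power index of $F_2$ contains $F_2\cap N$ for some normal subgroup $N$ of $p$-power index in $G$, which is what injectivity of $F_2^{\hat p}\to G_{\hat p}$ amounts to. In general the induced pro-$p$ topology on a finitely generated subgroup is strictly coarser than its intrinsic one (already determining which finitely generated subgroups of a free group are closed in the pro-$p$ topology is a nontrivial theorem), so ``transporting the lower central $p$-filtration faithfully'' is precisely the assertion to be proved, not a consequence of the Magnus-type embedding. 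Note that even in the base case where $G$ is free the correct argument is not via filtrations: there one uses that the closure is a $2$-generated non-abelian closed subgroup of a free pro-$p$ group, hence free pro-$p$ of rank $2$ by the pro-$p$ Nielsen--Schreier theorem, and then Hopficity. For a general RAAG, $G_{\hat p}$ is not free pro-$p$, closed subgroups need not be free, and no such rescue is available; as it stands your crucial claim is essentially as hard as the lemma itself, and I know of no reference that supplies it.

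For comparison, the paper sidesteps this entirely: it writes the RAAG $R$ as an iterated HNN extension over subgroups that are retracts (hence separable), so that the discrete Bass--Serre tree embeds in a profinite tree on which $\widehat R$ acts, and then applies the dichotomy of \cite{Zal90} for profinite groups without non-abelian free pro-$p$ subgroups acting on profinite trees, inducting on the length of this hierarchy: if $\overline F$ had no such subgroup it would either fix a vertex or normalize an edge stabilizer with soluble quotient, and in either case one finds a finitely generated free group inside a vertex or edge group and concludes by induction. To salvage your route you would need an independent proof that the closure of an arbitrary non-abelian free subgroup of a RAAG in the pro-$p$ completion surjects onto a non-abelian free pro-$p$ group; that does not follow from residual $p$-ness alone.
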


\begin{proof} We shall use  a construction of $R=R_n$ as a
sequence of special HNN-extensions $\{1\} = R_0, R_1, . . . , R_n
= R$, i.e. $$R_{i+1}=HNN(R_i, H_i, t)=\langle R_i,t\mid h^t=h, h\in
H_i\rangle,i=0,\ldots, n-1.$$ Note that $R$ is torsion free and
$R_{n-1}$ is a retract (i.e. a semidirect factor) of $R_n$. Hence $R_{n-1}$ is closed in the profinite topology of $R$
and $H_{n-1}$ is also closed by Corollary 9.1 in \cite{Ashot}.

It follows that $\widehat R=HNN(\widehat R_{n-1}, \overline
H_{n-1}, t)$ and a standard tree $S(R)$ on which $R$ acts embeds
in the standard profinite tree $S(\widehat R)$ on which $\widehat
R$ acts (see Example \ref{graph group completion}). In this case
 the vertex set is $V(S(\widehat R))= \displaystyle \widehat R/\widehat R_{n-1}$,
  the edge set is $E(S(\widehat R))= \widehat R/\widehat H_{n-1}$, and
  the initial and terminal vertices of an edge $g\widehat H_{n-1}$ are
  respectively  $g\widehat R_{n-1}$ and $gt\widehat R_{n-1}$. The tree
  $S(R)$ is defined similarly.  We use induction on $n$. For $n=1$ the result is clear. Suppose the result holds for $n-1$.  Consider the action of $G$ on $S(R)$.  If $\overline F$ does not contain a free non-abelian pro-$p$ subgroup then by the main result of \cite{Zal90} or \cite[Theorem 4.2.11]{R 2017} either $\overline F$ fixes a vertex $v$ or there exists an edge $e$ such that $\overline F_e$ is normal and  $\overline F/\overline F_e$ is soluble. So either $F_v$ or $F_e$ contains a finitely generated free subgroup. Hence   the result follows from the induction hypothesis. 
  
  To prove the last statement of the theorem we may assume w.l.o.g. that $G$ is a subgroup of $R$ and apply the first statement taking into account that if the closure of $F$ in $\overline G$ contains a non-abelian free pro-$p$ subgroup then so does the closure of $F$ in $\widehat G$.

\end{proof} 

\begin{theorem} A virtually compact special group $G$ is  hyperbolic relative to a finite collection of virtually abelian subgroups if and only if $\widehat G$ does not contain $F_2\times \Z_p$, where $F_2$ is a free pro-$p$ group of rank 2.

\end{theorem}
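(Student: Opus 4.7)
The plan is to prove the two directions separately. For the forward direction, I would argue by contradiction using Theorem \ref{main}(ii). Assume $G$ is toral relatively hyperbolic virtually compact special and that $H\cong F_2\times\Z_p$ embeds as a closed subgroup of $\widehat G$. Since $H$ is finitely generated, torsion-free and pro-$p$, Theorem \ref{main}(ii) writes $H$ as a free pro-$p$ product of parabolic subgroups. But any non-trivial free pro-$p$ product has trivial center, whereas $H$ has non-trivial center $\Z_p$, so $H$ must lie inside a single conjugate of some $\widehat{P_i}$. Since $P_i$ is virtually abelian, $\widehat{P_i}$ contains a finite-index abelian subgroup $A\cong\widehat{\Z}^n$; then $F_2\cap A$ has finite index in $F_2$, hence is a non-abelian free pro-$p$ subgroup sitting inside an abelian group, a contradiction.

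For the reverse direction I would argue the contrapositive using the abstract analog of Genevois \cite{G}: a virtually compact special group is hyperbolic relative to a finite collection of virtually abelian subgroups if and only if it contains no abstract subgroup isomorphic to $F_2\times\Z$. Assuming $G$ contains $H=F\times\langle z\rangle$ with $F$ free non-abelian and $z$ of infinite order central in $H$, the task is to exhibit a pro-$p$ copy of $F_2\times\Z_p$ inside $\widehat G$ for some prime $p$. To accomplish this, I would adapt the argument of Lemma \ref{free}: virtually embed $G$ into a right-angled Artin group $R$ and induct on the HNN hierarchy of $R$. At each stage $\widehat R$ acts on a profinite tree $S$, and the closure $\overline{\langle z\rangle}$ commutes with $\overline F$. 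If $\overline{\langle z\rangle}$ acted hyperbolically on $S$, then $\overline F$ would preserve its axis, forcing the abstract subgroup $F\leq\overline F$ to map into a virtually cyclic translation group, contradicting $F$ being free non-abelian. So $\overline{\langle z\rangle}$ is elliptic; then either $\overline H$ fixes a vertex (allowing the induction inside a conjugate of $\widehat{R_{n-1}}$) or we obtain a non-trivial splitting which, together with the commuting $z$, carries the $F_2\times\Z_p$ structure down one level of the hierarchy.

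The main obstacle I anticipate is coordinating the prime $p$: Lemma \ref{free} produces a non-abelian free pro-$p$ subgroup of $\overline F$ for some prime $p$ determined by the hierarchy, while the procyclic group $\overline{\langle z\rangle}\cong\prod_{q\in S}\Z_q$ contains $\Z_p$ only when $p\in S$. I would address this either by refining the inductive step of Lemma \ref{free} so that the prime $p$ can be chosen to lie in $S$, or by showing that $H=F\times\langle z\rangle$ is profinitely embedded in $G$ (via the quasiconvex separability available for virtually compact special groups), in which case $\widehat H=\widehat F\times\widehat\Z$ embeds in $\widehat G$ and any prime $p$ yields the desired $F_2\times\Z_p$.
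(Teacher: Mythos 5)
Your overall architecture matches the paper's: forward direction via Theorem \ref{main} plus the observation that a virtually abelian parabolic cannot contain a non-abelian free pro-$p$ group, reverse direction via Genevois's abstract characterization producing $F\times\Z$ and then Lemma \ref{free}. Two comments on where you diverge. In the forward direction you invoke Theorem \ref{main}(ii) and the triviality of the center of a non-trivial free pro-$p$ product; the paper instead applies part (i) to the subgroup $\Z_p\times\Z_p$ to place it in a parabolic and then uses malnormality of $\wh{P}_i$ (Theorem 3.3 of \cite{WZ-16}) to drag all of $F_2\times\Z_p$ into one parabolic. Both work; yours is arguably more self-contained since it avoids the external malnormality input.

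In the reverse direction you correctly isolate the real issue --- matching the prime $p$ produced by Lemma \ref{free} with a $\Z_p$ in the closure of the central $\Z$ --- but you do not close it, and your second proposed fix is overstated in a way that would fail. You cannot expect $H=F\times\langle z\rangle$ to be ``profinitely embedded via quasiconvex separability'': the subgroup $F\times\Z$ is precisely the witness to the failure of (relative) hyperbolicity and is not relatively quasiconvex, so no separability theorem for quasiconvex subgroups applies to it, and you do not need $\wh H=\wh F\times\wh\Z$ to embed. The paper's actual mechanism is much smaller: only the cyclic subgroup $\langle z\rangle$ needs to be separable, and this holds because cyclic subgroups of virtually compact special groups are virtual retracts (\cite[Corollary 1.6]{M}). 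Hence the closure of $\langle z\rangle$ in $\wh G$ is a full $\wh\Z=\prod_q\Z_q$, it is central in $\overline{F\times\Z}$, and it therefore contains $\Z_p$ for \emph{whatever} prime $p$ Lemma \ref{free} hands you; since a non-abelian free pro-$p$ group has trivial center, the product $F_2\cdot\Z_p$ is direct. This makes your first proposed fix (refining the induction in Lemma \ref{free} to control the prime) unnecessary, and your sketch of re-running the profinite Bass--Serre induction while tracking the commuting element $z$ --- including the claim that a hyperbolic $\overline{\langle z\rangle}$ would force $F$ into a virtually cyclic group --- is more machinery than the problem requires and is not carried out carefully enough to rely on.
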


\begin{proof} Let $G$ be a toral hyperbolic virtually compact special
group. Then by Theorem \ref{main} any subgroup of $\widehat G$ isomorphic to $\Z_p\times \Z_p$  is conjugate into the profinite completion of a maximal parabolic subgroup. Since the profinite completion of a maximal parabolic subgroup is malnormal by \cite[Theorem 3.3]{WZ-16}  one deduces that $F_2\times \Z_p$ must be a subgroup of the profinite completion of a parabolic subgroup, contradicting the hypothesis.

Conversely, suppose $G$ is virtually compact special not toral hyperbolic. Then by \cite[Theorem 1.4]{G} $G$ contains $F\times \Z$, where $F$ is free non-abelian. Let $H$ be the closure of $F\times \Z$ in $\widehat G$. Since the center of $F\times \Z$ is cyclic and cyclic groups of virtually compact special groups are virtual retracts (see \cite[Corollary 1.6]{M}) and so are separable,  the center $Z(H)$ of $H$ contains $\widehat \Z$. By Lemma \ref{free} $H$ contains a free non-abelian pro-$p$ subgroup. Hence $H$ contains  $F_2\times \Z_p$. 
\end{proof}

\section{Arithmetic hyperbolic manifolds} 

The closed hyperbolic case is the subject of \cite[Theorem F]{WZ-16};  in this
case our subgroup is free pro-$p$.
Thus we consider cusped hyperbolic standard arithmetic manifolds in this subsection.

Recall that a subgroup of $\pi_1M$  is called \emph{peripheral}  if it is conjugate to the fundamental group of a cusp and so is virtually isomorphic to $\Z^{n-1}$. A conjugate of the closure of a peripheral group in $\widehat{\pi_1M}$ will also be called peripheral; it is virtually  isomorphic to $\widehat \Z^{n-1}$.
It is well known that $\pi_1M$ is hyperbolic relative to its peripheral subgroups.

\bigskip
{\it Proof of Theorem \ref{thm: 3M Tits alternative}}. By \cite[Theorem 1.10]{bergeron} in cocompact case (resp.  by \cite[Lemma 6.3 and Theorem 7.4]{PS} in not cocompact case) $\pi_1M$ is virtually compact special (resp. a virtually  virutal retract of a virtually compact special group) and as mentioned above  hyperbolic relative to the peripheral subgroups which are virtually abelian. Therefore the result is a consequence of Theorem \ref{main}.

\begin{remark} Every virtually abelian free factor of Theorem \ref{thm: 3M Tits alternative} (ii) is a pro-$p$ Bieberbach group, i.e. the pro-$p$ completion of a torsion free extension of a free abelian group by finite $p$-group (see the last paagraph of \cite{camina}).  

\end{remark}

\bigskip
{\it Proof of Theorem \ref{congruence kernel}}.  Replacing $\Gamma$ by a  congruence subgroup if necessary,  we may assume w.l.o.g. that $\Gamma$ is torsion free and all peripheral subgroups are abelian. 

By Theorem \ref{thm: 3M Tits alternative} in the case of a standard arithmetic lattice of $SO(n,1)$ and by \cite[Theorem 4.6]{henry} (and its proof) in the case of any lattice in $SL_2(\C)$,  a finitely generated pro-$p$ subgroup $H$ of $\widehat \Gamma$ is a free product of abelian pro-$p$ groups and all free abelian factors of rank $>1$ are conjugate to cusp subgroups of $\widehat G$. But the congruence topology induces the (full) profinite topology on abelian subgroup (see \cite[Theorem 5 on page 61]{segal} combined with the fact that the congruence topology does not depend on a representation \cite[Lemma on page 301]{raghunatan}). Thus the congruence kernel does not intersect the cusp subgroups, so by the pro-$p$ version of the Kurosh subgroup theorem (see \cite[Theorem 9.6.2]{R 2017}) $C\cap H$ is a free pro-$p$ product of cyclic pro-$p$ groups and therefore is free pro-$p$.

\bibliographystyle{alpha}


\end{document}